\theoremstyle{plain}
\newtheorem{theorem}{Theorem}
\newtheorem{corollary}[theorem]{Corollary}
\newtheorem{proposition}[theorem]{Proposition}
\theoremstyle{definition}
\newtheorem{remark}{Remark}
\newtheorem{example}{Example}
\begin{document}
\title{Flocks of Cones: Star Flocks}
\author{William Cherowitzo}
\address{Department of Mathematical and Statistical Sciences, University of Colorado Denver, Campus Box 170, P.O. Box 173364, Denver, CO  80217-3364, U.S.A.}
\email{william.cherowitzo@ucdenver.edu}

\subjclass{Primary 51E20, 51E21;  Secondary 05B25}


\keywords{Flocks, Cones, Star Flocks, R\'edei Blocking Sets, Bilinear Flocks}
\begin{abstract}
The concept of a flock of a quadratic cone is generalized to
arbitrary cones. Flocks whose planes contain a common point are
called star flocks. Star flocks can be described in terms of
their coordinate functions. If the cone is ``big enough'', the
star flocks it admits can be classified by means of a connection
with minimal blocking sets of R\'edei type. This connection can also be used to obtain examples of bilinear flocks of non-quadratic cones.
\end{abstract}

\maketitle
\section{Introduction}
This is the second (see \cite{WEC2}) in a series of articles
devoted to providing a foundation for a theory of flocks of
arbitrary cones in $PG(3,q)$. The desire to have such a theory
stems from a need to better understand the very significant and
applicable special case of flocks of quadratic cones in
$PG(3,q)$. Flocks of quadratic cones have connections with
several other geometrical objects, including certain types of
generalized quadrangles, spreads, translation planes, hyperovals
(in even characteristic), ovoids, inversive planes and
quasi-fibrations of hyperbolic quadrics. This rich collection of
interconnections is the basis for the strong interest in such
flocks.

\section{Cones and Flocks}

Let $\pi_0$ be a plane and $V$ a point not on $\pi_0$ in
$PG(3,q)$. Let $\mathcal{S}$ be any set of points in $\pi_0$
(including the empty set). A \emph{cone}, $\Sigma =
\Sigma(V,\mathcal{S})$ is the union of all points of $PG(3,q)$ on
the lines $VP$ where $P$ is a point of $\mathcal{S}$. $V$ is
called the \emph{vertex} and $\mathcal{S}$ is called the
\emph{carrier} of $\Sigma$. $\pi_0$ is the \emph{carrier plane}
and the lines $VP$ are the \emph{generators} of $\Sigma$. In the
event that $\mathcal{S} = \emptyset$ we call $\Sigma$ the
\emph{empty cone} and by convention consider it to consist of
only the point $V$.

    A \emph{flock of planes} in $PG(3,q)$ is any set of $q$
\emph{distinct} planes of $PG(3,q)$. As $q$ planes can not cover
all the points of $PG(3,q)$, there always are points of the space
which do not lie in any of the planes in a flock of planes. If
$\Sigma$ is a cone of $PG(3,q)$, then a flock of planes,
$\mathcal{F}$, is said to be a \emph{flock of }$\Sigma$ when the
vertex of $\Sigma$ lies in no plane of $\mathcal{F}$ and no two
planes of $\mathcal{F}$ intersect at a point of $\Sigma$. Any
flock of planes is a flock of a cone, possibly only the empty
cone. In general, however, a given flock of planes will be a flock
of several cones.  In the literature on flocks of quadratic
cones, the approach is always to consider a fixed quadratic cone
and study the flocks of that cone. We will change the viewpoint
and consider, for a fixed flock of planes, the various cones of
which it is a flock. In the sequel we shall refer to a flock of
planes simply as a \emph{flock} and it shall be understood that
it is always a flock of a cone, even if the cone is not
explicitly indicated. Furthermore, we shall always assume, unless explicitly stated otherwise, that a flock is a flock of a non-empty cone.

    Let $\mathcal{F}$ be a flock. We can introduce coordinates in
$PG(3,q)$ so that the plane $x_3 = 0$ is one of the planes of the
flock and the point $V = ( 0,0,0,1 )$ is not in any
plane of the flock. Since $V$ is not
in any plane of $\mathcal{F}$, each of the planes of this flock
has an equation of the form $Ax_0 + Bx_1 + Cx_2 - x_3 = 0$. We parameterize the planes of $\mathcal{F}$
with the elements of $GF(q)$ in an arbitrary way except that we
will require that $0$ is the parameter assigned to the plane $x_3
= 0$. We can now describe the flock as, $\mathcal{F} = \{\pi_t\colon f(t)x_0 + g(t)x_1 + h(t)x_2 - x_3 = 0
\mid t \in GF(q)\}$ with $\pi_0 \colon x_3 = 0$.
The functions $f,g \text{ and } h$ are called the \emph{coordinate
functions} of the flock. Note that the requirement on the
parameter $0$ means that $f(0) = g(0) = h(0) = 0$. If $f,g \text{
and } h$ are the coordinate functions of the flock $\mathcal{F}$
we shall write $\mathcal{F} = \mathcal{F}(f,g,h)$. We remark that
the coordinate functions of a flock depend on the
parameterization of the flock.

    As all cones under consideration have vertex $V = (
0,0,0,1 )$ and the plane $\pi_0$ as the
carrier plane, a cone is determined when its
carrier $\mathcal{S}$, a point set in $\pi_0$, is specified.
Given a flock $\mathcal{F}$, there is a largest set
$\mathcal{S}_0$ of $\pi_0$ such that $\mathcal{F}$ is a flock of
the cone with carrier $\mathcal{S}_0$. This cone is called the
\emph{critical cone} of $\mathcal{F}$. If $\mathcal{C}$ is any
subset of the carrier of the critical cone of a flock
$\mathcal{F}$, then clearly $\mathcal{F}$ is also a flock of the
cone with carrier $\mathcal{C}$. Thus, determining the critical
cone of a flock implicitly determines all cones for which this
flock of planes is a flock.

\begin{theorem} \label{Th:herd}
A point $( a,b,c,0 )$ is in the carrier of the critical cone of the flock $\mathcal{F} = \mathcal{F}(f,g,h)$ in $PG(3,q)$ if and only if the function $t \mapsto af(t) + bg(t) + ch(t)$ is a permutation of $GF(q)$.
\end{theorem}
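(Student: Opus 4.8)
The plan is to reduce the statement to a concrete incidence computation on a single generator of the cone, and then to recognize the permutation condition. First I would observe that, because the critical cone is defined to have the \emph{largest} carrier, a point $P = (a,b,c,0)$ of $\pi_0$ lies in $\mathcal{S}_0$ if and only if $\mathcal{F}$ is itself a flock of the single-generator cone $\Sigma(V,\{P\})$, whose point set is exactly the line $VP$. Indeed, the defining requirement for a flock of a cone --- that no two of its planes meet at a point of the cone --- holds on a union of generators precisely when it holds on each generator separately, since every point of the cone other than $V$ lies on a unique generator and $V$ lies in no plane of $\mathcal{F}$. Thus $\mathcal{S}_0$ is exactly the set of those $P$ for which the flock condition holds along $VP$, and it suffices to analyze that one line.

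Next I would coordinatize the generator. The proper points of $VP$ can be written as $(a,b,c,w)$ with $w \in GF(q)$, together with the vertex $V=(0,0,0,1)$. Substituting $(a,b,c,w)$ into the equation of $\pi_t$ gives $af(t)+bg(t)+ch(t)-w=0$, so the point $(a,b,c,w)$ lies on $\pi_t$ if and only if $\phi(t) := af(t)+bg(t)+ch(t)$ equals $w$. Hence each value $w$ records precisely which planes of $\mathcal{F}$ pass through the corresponding point of the generator; the fibre $\phi^{-1}(w)$ is the set of parameters of the planes through $(a,b,c,w)$.

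The key step is then immediate. Since distinct parameters give distinct planes of $\mathcal{F}$, two distinct planes $\pi_{t_1},\pi_{t_2}$ meet at a point of $VP$ if and only if $\phi(t_1)=\phi(t_2)$ for some $t_1 \neq t_2$, that is, if and only if $\phi$ fails to be injective. Therefore $\mathcal{F}$ is a flock of $\Sigma(V,\{P\})$ exactly when $\phi$ is injective on $GF(q)$; as $GF(q)$ is finite, injectivity is equivalent to $\phi$ being a permutation. Combining this with the reduction of the first paragraph yields $P \in \mathcal{S}_0 \iff t \mapsto af(t)+bg(t)+ch(t)$ is a permutation of $GF(q)$, which is the assertion.

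I expect the only genuine subtlety --- rather than a real obstacle --- to be the careful bookkeeping for the two distinguished points of the generator: the vertex $V$, which must be excluded because it lies in no plane of $\mathcal{F}$, and the point $P$ itself (the case $w=0$), which always lies on $\pi_0$ because $f(0)=g(0)=h(0)=0$. It is worth checking that the permutation condition is consistent here, namely that $\phi$ being a permutation forces $\phi(t)=0$ only at $t=0$, so that $P$ lies on exactly one plane of $\mathcal{F}$. Everything else is a single substitution, so no heavy computation is required.
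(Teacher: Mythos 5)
Your proposal is correct and follows essentially the same route as the paper: parametrize the points of the generator $VP$ as $(a,b,c,\lambda)$, substitute into the plane equations to see that $\pi_t$ passes through that point exactly when $af(t)+bg(t)+ch(t)=\lambda$, and conclude that the flock condition on this line is equivalent to injectivity, hence bijectivity, of $t\mapsto af(t)+bg(t)+ch(t)$. The extra bookkeeping you flag (reduction to a single generator, the vertex, and the point $P$ itself) is sound but does not change the argument.
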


\begin{proof}
A point on the line $\langle (0,0,0,1), (a,b,c,0) \rangle$ other than the vertex has coordinates $( a,b,c,\lambda )$ with $\lambda \in GF(q)$. Such a point is on the plane $\pi_t$ of $\mathcal{F}$ if and only if $\lambda = af(t) + bg(t) + ch(t)$. No two planes of $\mathcal{F}$ will meet at the same point of this line if and only if $t \mapsto af(t) + bg(t) + ch(t)$ is a permutation of $GF(q)$. Thus, under this condition, the point $( a,b,c,0 )$ will be in the carrier of the critical cone of $\mathcal{F}$.
\end{proof}

    The critical cone of a flock may be fairly ``small''. Besides
the empty cone, we will consider cones whose carriers consist of
collinear points as being ``small''. Cones of this type are
called \emph{flat cones}. For the most part, we shall regard
flocks whose critical cones are flat as being uninteresting. For any nonempty set $S$ and any point $P$ of a projective plane, define $w_S(P)$ to be the number of lines through $P$ which contain an element of $S$. In the projective plane $\Pi$ we can define the \textit{width} of a set $S$ to be $W_S = min \{ w_S(P) \mid P \in \Pi \}$. Clearly, $W_S = 1$ if and only if $S$ consists of a set of collinear points. If $S$ is an oval in a projective plane of order $q$ ($q+1$ points, no three of which are collinear), then $W_S = \frac{q+1}{2}$ if $q$ is odd or $W_S = \frac{q+2}{2}$ if $q$ is even. This can be written as $W_S = \lfloor \frac{q+2}{2} \rfloor$, independent of the parity of $q$. Using this idea we can provide an admittedly crude classification of critical cones.
In $PG(3,q)$, if $S$ is the set of points of a carrier of a cone, then if $W_S < \lfloor \frac{q+2}{2} \rfloor$, we call the cone a \emph{thin cone}. Cones which are not
thin are called \emph{wide} and a wide cone with at least $q+1$
points in a carrier are called \emph{thick cones}. The class of
thick cones contains the quadratic cones as well as all cones
whose carrier contains any oval.

\section{Definitions and Basic Properties}
 A \emph{linear flock} is a flock whose planes share a
common line. Any cone whose carrier is not a proper blocking set
(a set of points which contains no line and which every line
intersects) in its carrier plane admits linear flocks. Linear
flocks can easily be characterized by their coordinate functions.

\begin{theorem} \label{Th:6.1}
A flock is a linear flock if and only if the three coordinate
functions are scalar multiples of each other.
\qed
\end{theorem}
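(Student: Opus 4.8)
The plan is to translate the geometric condition ``the planes share a common line'' into a linear-algebraic condition on the coefficient vectors $v_t := (f(t),g(t),h(t),-1)$, which are, up to scalar, the dual coordinates of the planes $\pi_t$. The $q$ planes of $\mathcal{F}$ all pass through a single line exactly when their common intersection contains a line, i.e. when the solution space of the system $\{\langle v_t, x\rangle = 0 : t \in GF(q)\}$ has vector dimension at least $2$. Since that dimension equals $4 - \dim\langle v_t : t \in GF(q)\rangle$, and since $\mathcal{F}$ contains at least two distinct planes (so that $\dim\langle v_t\rangle \ge 2$), I would first record the clean reformulation: $\mathcal{F}$ is linear if and only if $\dim\langle v_t : t\rangle = 2$.

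Next I would exploit the normalization built into the parameterization. Because $f(0)=g(0)=h(0)=0$, the vector $v_0$ equals $(0,0,0,-1)$, so every subspace containing all the $v_t$ contains $(0,0,0,1)$. For the forward implication, assume $\mathcal{F}$ is linear and set $U := \langle v_t : t\rangle$, a $2$-dimensional space containing $(0,0,0,1)$. Choosing any second basis vector and subtracting the appropriate multiple of $(0,0,0,1)$ to clear its last coordinate, I may write $U = \langle (0,0,0,1),\,(a,b,c,0)\rangle$ with $(a,b,c)\ne(0,0,0)$. Reading off the last coordinate of the membership relation $v_t \in U$ forces the coefficient of $(0,0,0,1)$ to be $-1$, so $v_t = -(0,0,0,1) + r(t)\,(a,b,c,0)$ for a scalar $r(t)$; comparing the first three coordinates gives $f(t)=a\,r(t)$, $g(t)=b\,r(t)$, $h(t)=c\,r(t)$. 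Thus $f,g,h$ are all scalar multiples of the single function $r$, which is precisely the assertion that they are scalar multiples of one another.

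For the converse I would simply reverse this computation: if $f=a\phi$, $g=b\phi$, $h=c\phi$ for a common function $\phi$ and constants $a,b,c$, then every $v_t$ lies in $\langle(0,0,0,1),(a,b,c,0)\rangle$. Here $(a,b,c)\ne(0,0,0)$ automatically, since otherwise $f,g,h$ would vanish identically and the $q\ge 2$ planes of $\mathcal{F}$ could not be distinct; hence this span is genuinely $2$-dimensional, its annihilator is a line, and all planes of $\mathcal{F}$ contain it, so $\mathcal{F}$ is linear.

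The only delicate points, and where I expect the real work to sit, are establishing the equivalence ``linear $\iff$ $\dim\langle v_t\rangle = 2$'' carefully, and handling the degenerate cases in which one or two of $a,b,c$ vanish. These latter cases show that ``scalar multiples of each other'' must be understood in the sense of common proportionality to a single function, with proportionality constants allowed to be zero; once that reading is fixed, the coordinate bookkeeping above is routine.
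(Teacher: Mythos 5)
Your argument is correct: the paper states Theorem~\ref{Th:6.1} without proof (the \qed follows the statement directly), and your dualization — identifying linearity with the dual coefficient vectors spanning a $2$-dimensional space containing $(0,0,0,1)$, then reading off the proportionality $f=ar$, $g=br$, $h=cr$ — is precisely the routine verification the author leaves to the reader. Your closing caveat about interpreting ``scalar multiples of each other'' as common proportionality to a single function (allowing zero coefficients) is a worthwhile clarification of the statement as written.
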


 A \emph{star flock} is a flock whose
planes share a common point and a \emph{proper} star flock is one
for which this common point is unique. The common point of a
proper star flock is called the \emph{star point}. Linear flocks
are clearly star flocks, but not proper star flocks.

 We may also characterize star flocks in terms of their coordinate functions.

\begin{theorem} \label{Th:7.1}
 A flock is a star flock if and only if  the coordinate functions are linearly
dependent over GF(q).
\qed
\end{theorem}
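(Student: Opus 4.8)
The plan is to translate the geometric incidence condition into a single algebraic identity and then read off the claimed equivalence directly. A projective point $P = (a_0,a_1,a_2,a_3)$ lies on the plane $\pi_t$ precisely when $a_0 f(t) + a_1 g(t) + a_2 h(t) - a_3 = 0$, so $\mathcal{F}$ is a star flock if and only if there is a nonzero vector $(a_0,a_1,a_2,a_3)$ for which
\[
a_0 f(t) + a_1 g(t) + a_2 h(t) = a_3 \qquad \text{for all } t \in GF(q).
\]
First I would establish this reformulation carefully, emphasizing that $P$ being a \emph{common} point of the flock means the single displayed identity must hold simultaneously for every parameter $t$.

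The second step, which carries the real content, is to exploit the normalization built into the coordinate functions, namely $f(0) = g(0) = h(0) = 0$; this records the fact that $\pi_0\colon x_3 = 0$ belongs to $\mathcal{F}$. Evaluating the displayed identity at $t = 0$ forces $a_3 = 0$. Thus any common point of the flock automatically lies in the carrier plane $\pi_0$, and the identity collapses to the homogeneous relation $a_0 f(t) + a_1 g(t) + a_2 h(t) = 0$ holding identically in $t$. Since $a_3 = 0$ and $P$ is a genuine point of $PG(3,q)$, the triple $(a_0,a_1,a_2)$ must be nonzero.

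For the forward direction I would then observe that this homogeneous identity, with $(a_0,a_1,a_2) \neq (0,0,0)$, is exactly the assertion that $f$, $g$ and $h$ are linearly dependent over $GF(q)$ when viewed as vectors in the $GF(q)$-vector space of functions from $GF(q)$ to $GF(q)$. Conversely, given a dependence relation $a_0 f + a_1 g + a_2 h = 0$ with coefficients not all zero, the point $(a_0,a_1,a_2,0)$ is a well-defined point of $PG(3,q)$ satisfying the equation of every $\pi_t$, whence $\mathcal{F}$ is a star flock. This closes the equivalence in both directions.

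I do not expect a serious obstacle: the argument is in essence a dictionary between the common-point condition and a linear relation among the coordinate functions. The one point requiring care---and the place where the hypothesis $f(0)=g(0)=h(0)=0$ is indispensable---is the second step, where one must rule out a nonzero constant $a_3$. Without the presence of $\pi_0$ in the flock, the star condition would correspond to an \emph{affine} rather than a linear dependence, and the stated theorem would fail; so I would be sure to flag that the reduction $a_3 = 0$ is precisely what aligns the geometry with genuine $GF(q)$-linear dependence.
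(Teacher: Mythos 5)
Your proof is correct, and since the paper states this theorem without proof (the \qed\ indicates the argument is considered immediate), your argument is exactly the intended one: the common point forces an identity $a_0f(t)+a_1g(t)+a_2h(t)=a_3$ for all $t$, the normalization $f(0)=g(0)=h(0)=0$ kills $a_3$, and what remains is precisely a nontrivial $GF(q)$-linear dependence among $f$, $g$, $h$. Your remark that the presence of $\pi_0$ in the flock is what reduces an affine relation to a linear one is a worthwhile observation that the paper leaves implicit.
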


Since a flock is a set of planes of $PG(3,q)$ it is natural to define the equivalence of flocks
without reference to the cones that they are flocks of. While there are several ways to do this, the following has proved to be most useful. Two flocks, $\mathcal{F}_1$ and $\mathcal{F}_2$ are \emph{equivalent} if they are in the same orbit of the group $P{\Gamma}L(4,q)_{V,x_3=0}$. That is, there is a collineation of $PG(3,q)$ fixing the point $V = ( 0,0,0,1 )$ and stabilizing the plane $x_3 = 0$ which maps the planes of $\mathcal{F}_1$ to those of $\mathcal{F}_2$.

\begin{theorem} \label{Th:11.1.1}
 A star flock is equivalent to a flock $\mathcal{F}(t,g(t),0)$, where $0$ denotes the function which is identically zero.
\end{theorem}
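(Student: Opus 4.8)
The plan is to first work out exactly how the stabilizer group $P\Gamma L(4,q)_{V,x_3=0}$ acts on coordinate functions, and then to exploit the two defining features of the situation---the star condition and the non-emptiness of the critical cone---to normalize the triple $(f,g,h)$. No field automorphism will be needed, so I would work inside the $PGL$ part of the group.

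First I would determine the matrices fixing $V=(0,0,0,1)$ and stabilizing $x_3=0$. Fixing $V$ forces the last column to be $(0,0,0,d)^{T}$ with $d\neq 0$, and stabilizing $x_3=0$ forces the last row to be $(0,0,0,d)$; hence such a collineation has linear part $M=\begin{pmatrix} A & 0 \\ 0 & d \end{pmatrix}$ with $A\in GL(3,q)$. Acting on the plane $\pi_t$, whose dual coordinates are $[f(t),g(t),h(t),-1]$, by $(M^{-1})^{T}$ and renormalizing the last coordinate back to $-1$, I would show that the new coordinate functions are obtained by applying the matrix $d(A^{T})^{-1}$ to the column $(f,g,h)^{T}$. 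Since $A$ and $d$ are free, this realizes an arbitrary $N\in GL(3,q)$ acting on $(f,g,h)^{T}$, and $f(0)=g(0)=h(0)=0$ guarantees the transformed functions still vanish at $0$, so $\pi_0$ remains the plane $x_3=0$. Separately, because equivalence only compares the underlying \emph{sets} of planes, I may relabel the planes by any permutation $\sigma$ of $GF(q)$ with $\sigma(0)=0$, replacing the coordinate functions by $f\circ\sigma,\ g\circ\sigma,\ h\circ\sigma$ without leaving the equivalence class. These two operations---an arbitrary $N\in GL(3,q)$ on $(f,g,h)^{T}$ together with reparameterization---are the tools for the normalization.

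Next I would choose $N$ cleverly. By Theorem \ref{Th:7.1} the star condition supplies a nonzero relation $(p,q,r)$ with $pf+qg+rh\equiv 0$. Since every flock here is a flock of a non-empty cone, its critical cone has a carrier point $(a,b,c,0)$, and Theorem \ref{Th:herd} says $\phi:=af+bg+ch$ is a permutation of $GF(q)$; in particular $\phi$ is not the zero function, so $(a,b,c)$ and $(p,q,r)$ are linearly independent in $GF(q)^{3}$. I would then take $N$ with first row $(a,b,c)$, third row $(p,q,r)$, and any second row completing these to an invertible matrix. The associated collineation sends $(f,g,h)$ to $(\phi,\psi,0)$ for some $\psi$, where $\phi$ is a permutation with $\phi(0)=0$. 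Finally I would reparameterize by $\sigma=\phi^{-1}$, legitimate because $\phi(0)=0$ forces $\phi^{-1}(0)=0$, turning $(\phi,\psi,0)$ into $(\phi\circ\phi^{-1},\ \psi\circ\phi^{-1},\ 0)=(t,\,g,\,0)$ with $g=\psi\circ\phi^{-1}$, the desired form.

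I expect the main obstacle to be the bookkeeping of the first step: correctly identifying the stabilizer as block matrices $\begin{pmatrix} A & 0 \\ 0 & d \end{pmatrix}$ and tracking the dual (plane) coordinate action carefully enough to confirm that the full group $GL(3,q)$ is available on $(f,g,h)^{T}$, with the reparameterizations as an additional independent freedom. Once that dictionary is established, the only genuinely flock-theoretic inputs are the star condition, which furnishes the annihilating row that zeroes out the third coordinate function, and the non-empty-cone hypothesis, which furnishes the permutation row that, after relabeling by $\phi^{-1}$, installs the identity function in the first slot.
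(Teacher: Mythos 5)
Your proof is correct and is essentially the paper's argument: the paper normalizes by moving a carrier point of the critical cone to $(1,0,0,0)$ (making the first coordinate function a permutation, then reparameterizing to the identity) and the star point to $(0,0,1,0)$ (forcing $h\equiv 0$), which is exactly the dual of your choice of the rows $(a,b,c)$ and $(p,q,r)$ in $N$. Your version merely makes explicit the $GL(3,q)$-action on the coordinate-function triple and the linear-independence check that the paper leaves implicit.
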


\begin{proof}
Let $P$ be a point in the carrier of the critical cone and $Q$ the star point of a star flock. By using a collineation of $PG(3,q)$ which fixes $(0,0,0,1)$ and stabilizes $x_3 = 0$, we may assume that $P = (1,0,0,0)$ and $Q = (0,0,1,0)$. If the star flock is given by $\mathcal{F} = \mathcal{F}(f,g,h)$, then by Theorem \ref{Th:herd}, $t \mapsto f(t)$ is a permutation. We can re-parameterize the planes of the flock so that $f(t) = t$. Since all the planes of the flock pass through $Q$, we must have $h \equiv 0$.
\end{proof}

\section{Star Flocks of Wide Cones}

 Viewing flocks in a dual setting has been
an effective technique in the classical quadratic cone situation.
It is especially useful in studying star flocks.

 Let $\mathcal{F}$ be a flock with critical cone $\Sigma = \Sigma(V,\mathcal{S})$
in $PG(3,q)$. By passing to the dual, $\mathcal{F}$ becomes a set
of $q$ points in $PG(3,q)$ and the cone is the set of all planes
passing through a set of lines in the plane corresponding to $V$,
with the property that no line determined by a pair of the $q$
points lies in any of these planes.

 If $\mathcal{F}$ is a proper star flock, then the corresponding $q$
points in the dual are coplanar. We fix some notation for this situation.
Let $\mathcal{F} = \mathcal{F}(t,g(t),0)$ be a star flock of the cone
$\Sigma = \Sigma(V,\mathcal{S})$ with $V = ( 0,0,0,1)$
and the planes of $\mathcal{F}$ given by $tx_0 + g(t)x_1 +
 - x_3 = 0$. For the sake of clarity, we employ the
notational device of using $(X,Y,Z,W)$ to refer to the generic
coordinates of a point when viewed in the dual setting. Using the
duality $(x,y,z,w) \leftrightarrow [x,y,z,-w]$, the flock becomes a set of points $D_F =
\{(t, g(t), 0, 1) \colon t \in GF(q)\}$. $V$ becomes the
plane with equation $W = 0$. The points on a generator of
$\Sigma(V,\mathcal{S})$ become the set of all planes passing
through a line in $W = 0$. The set of lines in $W = 0$,
corresponding to the generators of $\Sigma$ will
be denoted by $D_G$, and the set of all planes passing through
the lines of $D_G$ will be denoted by $D_{\Sigma}$. The condition that lines formed by pairs of points of
$D_F$ do not lie in the planes of $D_{\Sigma}$ is equivalent to
the condition that these lines do not intersect the lines of
$D_G$. The $q$ points of $D_F$ lie in the plane $Z = 0$, and since $\mathcal{F}$ is proper, they are not all collinear.
Let $m$ be the line of intersection of the planes $W = 0$ and $Z = 0$.

  The $q$ points of $D_F$ lie
in the affine plane obtained by removing the line $m$ from
$Z = 0$. Since $\mathcal{F}$ is a flock, the line joining any two
of these points can not intersect $W=0$ at a point on any line of
$D_G$. Thus, the set of points on $m$ at which these lines meet
$m$ is disjoint from the set of points on $m$ at which the lines
of $D_G$ meet $m$. Let $M$ denote the set of points of $m$ at
which lines determined by pairs of points of $D_F$ meet $m$, and
let $N = |M|$. $N$ can be thought of as the number of ``slopes''
determined by the set of $q$ points of $D_F$.

A \emph{blocking set} in a projective plane is a set of points in the plane which every line intersects. Let $n$ be the largest number of collinear points in a blocking set. In a plane of order $q$, a blocking set of size $q+n$ is called a \emph{R\'{e}dei blocking set}.

\begin{proposition} \label{Pr:Redei}
In $Z = 0$ the points of $D_F \cup M$ form a R\'{e}dei blocking set.
\end{proposition}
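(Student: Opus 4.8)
The plan is to work entirely inside the projective plane $Z=0$, viewed as the affine plane $AG(2,q)$ obtained by deleting $m$, with $m$ playing the role of the line at infinity. Under this picture the $q$ points of $D_F$ are affine points (they have $W=1$, hence lie off $m$), while $M$ is exactly the set of points of $m$ occurring as slopes of pairs of points of $D_F$, i.e.\ the set of directions determined by $D_F$. I would establish the proposition in two stages: first that $D_F\cup M$ meets every line of $Z=0$ (so it is a blocking set), and then that it has size $q+N$ whereas its largest collinear subset has exactly $N$ points, which is precisely the R\'edei condition.

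For the blocking-set stage I would argue by parallel classes. The line $m$ is met because $M\neq\emptyset$: since $\mathcal{F}$ is proper, $D_F$ is not collinear and so determines at least one direction. Any other line $\ell$ meets $m$ in a single point $P_\infty$. If $P_\infty\in M$ then $\ell$ is blocked at once. If $P_\infty\notin M$, then $P_\infty$ is not a determined direction, so no two points of $D_F$ are collinear with $P_\infty$; hence each of the $q$ affine lines through $P_\infty$ carries at most one point of $D_F$. As there are $q$ such lines and exactly $q$ points of $D_F$, pigeonhole forces each of these lines, $\ell$ among them, to carry exactly one point of $D_F$. Thus every line is blocked.

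For the R\'edei stage, note that $D_F$ and $M$ are disjoint (one lies off $m$, the other on $m$), so $|D_F\cup M|=q+N$. It remains to check that the maximum number of collinear points of $D_F\cup M$ equals $N$. The line $m$ contributes exactly $N$. A line $\ell\neq m$ carrying $j\ge 2$ points of $D_F$ has its own slope in $M$, so $\ell$ passes through that one point of $M$ and thus carries $j+1$ points of $D_F\cup M$; lines meeting $D_F$ in at most one point clearly give at most $2$. Letting $k$ be the largest number of collinear points of $D_F$, the decisive inequality is $N\ge k+1$: choose $P\in D_F$ off a line $\ell_0$ realizing $k$ (possible since $D_F$ is not collinear, whence $k<q$); the $k$ joins of $P$ with the points of $D_F$ on $\ell_0$ have $k$ pairwise distinct directions, none equal to the direction of $\ell_0$, and together with the direction of $\ell_0$ itself they exhibit $k+1$ distinct points of $M$. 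Consequently every line other than $m$ carries at most $k+1\le N$ points of $D_F\cup M$, the maximum collinearity is exactly $N$ and is attained on $m$, and $D_F\cup M$ is a R\'edei blocking set.

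The main obstacle is this last inequality $N\ge k+1$, which is what guarantees that $m$, rather than some secant of $D_F$, is the line of maximal collinearity and hence the R\'edei line. Everything here rests on the hypothesis that the star flock is proper: it is exactly this that makes $D_F$ non-collinear and provides a point $P$ lying off the densest secant $\ell_0$.
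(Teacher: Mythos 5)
Your proof is correct and follows essentially the same route as the paper: the pigeonhole argument through an undetermined direction for the blocking property, and the bound on collinear points of $D_F$ for the R\'edei condition. The only difference is that you explicitly justify the key inequality $N \ge k+1$ (by joining a point of $D_F$ off the densest secant to its points), a step the paper asserts in one line from the non-collinearity of $D_F$.
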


\begin{proof}
Consider a point of $m$ which does not lie in $M$. The $q$ lines through this point other than $m$ must each contain exactly one point of $D_F$. Therefore, $D_F \cup M$ is a blocking set. Since the points of $D_F$ are not collinear, no collinear set of points of $D_F$ can contain more than $N-1$ points. Thus $D_F \cup M$ is of R\'edei type of size $q+N$.
\end{proof}



 The problem of determining the number of slopes
determined by $q$ points in an affine plane has been well studied
(\cite{LR:70},\cite{LoSc:81},\cite{BlBrSz:95},\cite{BlBaBrStSz:99},\cite{SB:02})
due to the connection with blocking sets of R\'edei type.
This problem is completely settled in the cases we are interested
in and the relevant theorem (paraphrased in our terminology) is
due to Ball \cite{SB:02} who settled two open cases of the main
theorem first given by Blokhuis, Ball, Brouwer, Storme and
Sz\"{o}nyi \cite{BlBaBrStSz:99}.

%
%
%
%

\begin{theorem}[\cite{SB:02}] \label{Th:11.1.2}
Let $D_F$ consist of $q$ points of $PG(2,q)$ whose coordinates are
given by $(t,g(t),1)$, $t \in GF(q)$, where $g$ is a permutation
polynomial with $g(0) = 0$. If $q = p^n$ for some prime $p$, let
$e$ be the largest integer so that any line of the plane
containing at least two points of $D_F$ contains a multiple of
$p^e$ points of $D_F$. If $N$ is the number of ``slopes''
determined by the points of $D_F$ then we have one of the
following:
\renewcommand{\theenumi}{\roman{enumi}}
\begin{enumerate}
\item  $e = 0$ and $\frac{q+3}{2} \le N \le q+1$,

\item  $e \mid n$ and $p^{n-e} + 1 \le N \le
\frac{q-1}{p^e -1}$,

\item $e = n$ and $N = 1$.
\end{enumerate}
Moreover, if $p^e > 2$, then g is a $p^e$-linearized polynomial.
\qed
\end{theorem}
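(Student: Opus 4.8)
The plan is to translate the counting of slopes into a statement about a single bivariate polynomial and then to invoke the theory of fully reducible lacunary polynomials over $GF(q)$. Writing $U = \{(t,g(t)) : t \in GF(q)\}$ for the affine point set $D_F$, I would form the R\'edei polynomial
\[
R(X,Y) = \prod_{t \in GF(q)} \bigl( X + tY - g(t) \bigr),
\]
a polynomial of degree $q$ in $X$ whose coefficients $\sigma_j(Y)$ (the elementary symmetric functions of the quantities $tY - g(t)$) satisfy $\deg_Y \sigma_j \le j$. The key observation is that a slope $m \in GF(q)$ is \emph{not} determined precisely when $t \mapsto g(t) - tm$ is a permutation, and in that case each element of $GF(q)$ occurs exactly once among the roots of $R(X,m)$, so that $R(X,m) = X^q - X$. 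Because $g$ is a permutation, the vertical direction is never determined, so exactly $q - N$ of the affine slopes are non-determined.

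Next I would extract the lacunary structure. At each of the $q-N$ non-determined slopes the identity $R(X,m) = X^q - X$ forces $\sigma_j(m) = 0$ for $1 \le j \le q-2$. Since $\sigma_j$ has degree at most $j$ in $Y$, whenever $j < q - N$ the polynomial $\sigma_j$ acquires more roots than its degree permits and hence vanishes identically. Consequently
\[
R(X,Y) = X^q + \sigma_{q-N}(Y)X^{N} + \cdots + \sigma_q(Y),
\]
so there is a gap of length $q - N$ between the leading term $X^q$ and the next surviving term $X^N$. This is exactly the regime in which the structure theorems for lacunary polynomials apply: a large gap is incompatible with $R$ being fully reducible for $q-N$ values of $Y$ unless $N$ is large or $R$ has very special form.

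The heart of the argument, and the step I expect to be the main obstacle, is the classification of these polynomials. For each determined slope one has $R(X,m) = \prod_i (X - c_i)^{\mu_i}$, where $\mu_i$ is the number of points of $U$ on the corresponding line. The integer $e$ enters through these multiplicities: the hypothesis that every secant meets $U$ in a multiple of $p^e$ points means the repeated factors occur with multiplicities divisible by $p^e$, which, via the vanishing of Hasse derivatives of $R$ with respect to $X$, forces the relevant factors of $R$ to be $p^e$-th powers. Balancing the gap length $q - N$ against the surviving degrees then yields the numerical bounds $p^{n-e}+1 \le N \le (q-1)/(p^e-1)$ in case (ii) and the threshold $(q+3)/2$ in case (i); pushing the $p^e$-power structure further produces the additivity that makes $g$ a $p^e$-linearized polynomial when $p^e > 2$.

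I would expect the genuinely difficult part to be the sharp lower bounds together with the linearity conclusion in the intermediate cases, rather than the bookkeeping sketched above. These are precisely the two cases settled by Ball in \cite{SB:02} through a delicate analysis of the fully reducible lacunary polynomials arising here, refining the earlier work of Blokhuis, Ball, Brouwer, Storme and Sz\"{o}nyi \cite{BlBaBrStSz:99}. Short of reproducing that analysis in full, I would treat their classification of such polynomials as the substantive external ingredient and deduce the stated trichotomy, and the $p^e$-linearity when $p^e > 2$, directly from it.
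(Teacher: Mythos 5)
The paper gives no proof of this theorem at all---it is quoted directly from Ball \cite{SB:02} (completing \cite{BlBaBrStSz:99}) and marked as proved elsewhere---so your outline, which sketches the standard R\'edei-polynomial setup (the gap $X^q + \sigma_{q-N}(Y)X^N + \cdots$ forced by the non-determined directions, and the entry of $p^e$ through the multiplicities of linear factors at determined directions) and then explicitly defers the hard classification of the resulting fully reducible lacunary polynomials to those same sources, is consistent with and in fact more informative than the paper's treatment. The sketch is accurate as a description of the cited proof, with one trivial slip: the vertical direction is undetermined because the first coordinates $t$ are all distinct, not because $g$ is a permutation (that hypothesis instead rules out the horizontal direction).
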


 A \emph{$p^e$-linearized polynomial} is one of the form:
\begin{equation*}
g(t) = \sum_{i=0}^{k-1} \alpha_i t^{p^{ie}}, \text{ where }
\alpha_i \in GF(p^n), n = ke.
\end{equation*}

\noindent In this theorem, all the bounds for $N$ are sharp.
The $p^e$-linearized permutation polynomials form a group under
composition mod($x^q-x$). This is the Betti-Mathieu group (see \cite{LiNi:97}) and it
is known to be isomorphic to $GL(k,p^e)$ where $q = p^{ek}$. Thus,

\begin{proposition} \label{Pr:13}
Let $q = p^n$ and $s = p^e$ with $n = ek$. Then the number of
monic $s$-linearized permutation polynomials over $GF(q)$ is:
\begin{equation} \label{Eq:13}
s^{\frac{k(k-1)}{2}}\prod_{i=1}^{k-1}  (s^i-1).
\end{equation}
\qed
\end{proposition}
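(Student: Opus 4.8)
The plan is to build on the identification recorded just above the statement: the $s$-linearized permutation polynomials of $GF(q)$, taken modulo $x^q - x$ and composed, form the Betti--Mathieu group, which is isomorphic to $GL(k,s)$ (recall $q = s^k$). Hence there are exactly $|GL(k,s)| = \prod_{i=0}^{k-1}(s^k - s^i)$ such polynomials in all. Pulling $s^i$ out of the $i$-th factor and reindexing gives $\prod_{i=0}^{k-1} s^i(s^{k-i}-1) = s^{k(k-1)/2}\prod_{j=1}^{k}(s^j-1)$, which is exactly $(s^k-1)$ times the expression to be proved. So the entire content of the proposition is that imposing the monic normalization cuts the total down by the factor $s^k - 1 = q - 1 = |GF(q)^*|$; everything else is the routine order formula for $GL(k,s)$.

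I would realize that factor as the length of a free orbit. The group $GF(q)^*$ acts on the set of $s$-linearized permutation polynomials by post-composition with scalar multiplication, $g \mapsto cg$, where $(cg)(t) = \sum_i (c\alpha_i)t^{s^i}$; this stays within the $s$-linearized polynomials and, being a composite of two bijections, within the permutation polynomials. The action is free, since $cg = g$ forces $(c-1)g \equiv 0$ and hence $c = 1$ as $g \not\equiv 0$. Therefore every orbit has exactly $q-1$ elements, and the number of orbits is $|GL(k,s)|/(s^k-1) = s^{k(k-1)/2}\prod_{j=1}^{k-1}(s^j-1)$, the claimed value.

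It then remains to match orbits with monic polynomials by choosing a canonical representative in each. The subtlety --- and what I expect to be the main obstacle --- is that the right normalization is not ``leading coefficient equal to $1$'': a genuine permutation can have $\alpha_{k-1} = 0$ (for instance $g(t) = t$), so that convention would miss entire orbits and give a strictly smaller count. The normalization that works is the value $g(1) = \sum_i \alpha_i$. Since $g$ is additive with $g(0) = 0$ and is injective, $g(1) \neq 0$; and $(cg)(1) = c\,g(1)$ scales linearly, so each orbit contains exactly one polynomial with $g(1) = 1$. Declaring these the monic representatives yields a bijection between monic $s$-linearized permutation polynomials and the $GF(q)^*$-orbits, whence the count equals $s^{k(k-1)/2}\prod_{j=1}^{k-1}(s^j-1)$ as required. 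The crux is thus the verification that $g \mapsto g(1)$ is the correct $GF(q)^*$-equivariant invariant --- nonvanishing on permutations and linear under the scaling action --- rather than any degree-based normalization.
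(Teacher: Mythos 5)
Your argument is correct and is essentially the derivation the paper intends: Proposition \ref{Pr:13} is stated as an immediate consequence of the Betti--Mathieu isomorphism with $GL(k,s)$, i.e.\ the count is $|GL(k,s)|/(q-1) = s^{k(k-1)/2}\prod_{i=1}^{k-1}(s^i-1)$, exactly the computation you carry out. Your detour through the normalization $g(1)=1$ is harmless but unnecessary: ``monic'' here means that the highest \emph{nonzero} coefficient equals $1$ (so $t$ is the monic linear polynomial, $t^p-ct$ is monic of degree $p$, etc., as in the paper's use in Corollary \ref{Co:11.1.4}), and with that reading each scalar orbit already contains exactly one monic element, so the division by $q-1$ goes through directly.
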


 Theorem \ref{Th:11.1.2}  has the following implication for star flocks of wide cones:

\begin{theorem} \label{Th:11.1.3}
If $q = p^n$ with $p$ a prime, then $\mathcal{F}$ is a proper
star flock with a wide critical cone if and only if there exists a
coordinatization such that $\mathcal{F} =
\mathcal{F}(t,g(t),0)$ where $g$ is a non-linear
$p^e$-linearized permutation polynomial for some $e \mid n$ with
$e < n$. Furthermore, the critical cone of this star flock contains the points $(x, f(x), 1, 0)$
for $x \ne 0$ if and only if
\begin{equation} \label{Eq:12}
    f(x)g(t) + xt = 0 \mbox{ has no solutions with } xt \ne 0 .
\end{equation}

\end{theorem}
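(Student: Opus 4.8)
The plan is to translate ``wide critical cone'' into a bound on the number $N$ of slopes and then read off the structure of $g$ from Ball's theorem (Theorem~\ref{Th:11.1.2}). First I would fix the normal form $\mathcal{F} = \mathcal{F}(t,g(t),0)$ supplied by Theorem~\ref{Th:11.1.1}, so that the star point is $Q = (0,0,1,0)$ and $f(t)=t$. By Theorem~\ref{Th:herd}, a point $(a,b,c,0)$ lies in the carrier exactly when $t \mapsto at + bg(t)$ is a permutation; since this depends only on the ratio $(a:b)$, the carrier is a union of full lines through $Q$ (each with $Q$ deleted), one for every ``good'' direction. Counting good directions---$(1:0)$ together with those $(a:1)$ for which $-a$ is not a slope of $D_F$---gives exactly $q-N+1$ of them. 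A short incidence count then shows $w_S(Q) = q-N+1$, while $w_S(P) = q+1$ for $P$ in the carrier and $w_S(P)=q$ otherwise, so that $W_S = q-N+1$.

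With this identity in hand, ``wide'' (that is, $W_S \ge \lfloor (q+2)/2 \rfloor$) becomes precisely $N \le \lceil q/2 \rceil$. I would then invoke Theorem~\ref{Th:11.1.2}. Case (i) has $N \ge (q+3)/2$ and is excluded by wideness; case (iii) forces $N=1$, hence $g$ linear, contradicting properness; so only case (ii) survives, giving $e \mid n$ with $1 \le e < n$. A separate check---that the lower bound $N \ge p^{\,n-e}+1$ of case (ii) is compatible with $N \le \lceil q/2 \rceil$ only when $p^e > 2$---rules out $p=2,\,e=1$ and guarantees $p^e>2$; Ball's theorem then gives that $g$ is $p^e$-linearized, and it is non-linear because the flock is proper. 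One technical point feeds this argument: to apply Theorem~\ref{Th:11.1.2} I need $g$ itself to be a permutation, that is, the direction $(0:1)$ to be good. This need not follow from Theorem~\ref{Th:11.1.1} alone, but wideness guarantees at least two good directions, so I can apply a collineation fixing $V$ and $Q$ that carries a second good direction to $(0:1)$ and then re-parameterize to restore $f(t)=t$; since $N = q+1-W_S$ is collineation-invariant, this leaves $N$ unchanged. The converse runs the same equivalences backwards: a non-linear $p^e$-linearized permutation polynomial makes $D_F$ a $GF(p^e)$-subspace, so every secant meets it in a multiple of $p^e$ points and we are in case (ii); the bound $N \le (q-1)/(p^e-1) \le \lceil q/2 \rceil$ (using $p^e>2$) returns wideness, while $h \equiv 0$ together with non-linearity gives a proper star flock via Theorems~\ref{Th:7.1} and~\ref{Th:6.1}.

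For the ``furthermore'' clause I would argue directly. By Theorem~\ref{Th:herd} the point $(x,f(x),1,0)$ lies in the carrier if and only if $t \mapsto xt + f(x)g(t)$ is a permutation. The key simplification is that $g$, being $p^e$-linearized, is additive, so this map is an additive endomorphism of $(GF(q),+)$; such a map is a permutation if and only if its kernel is trivial, i.e.\ when $xt + f(x)g(t)=0$ has no nonzero solution $t$. For $x \ne 0$ the condition $t \ne 0$ is the same as $xt \ne 0$, and demanding carrier membership for every $x \ne 0$ yields exactly \eqref{Eq:12}.

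The main obstacle I anticipate is the boundary case in even characteristic: showing cleanly that wideness and case (ii) together force $p^e>2$ (equivalently $e \ge 2$ when $p=2$), since a non-linear $2$-linearized permutation polynomial with intrinsic parameter $e=1$ yields a thin cone and must be excluded for the equivalence to hold in both directions. The width identity $W_S = q-N+1$ and the coordinate change making $g$ a permutation are the other steps demanding care, but both become routine once the slope dictionary is in place.
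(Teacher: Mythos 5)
Your proposal follows essentially the same route as the paper: normalize via Theorem \ref{Th:11.1.1}, translate wideness into the bound $N \le \lfloor\frac{q+1}{2}\rfloor$ on the number of slopes of $D_F$, invoke Ball's theorem to eliminate cases (i), (iii) and $p^e\le 2$, reverse the count for the converse, and settle the ``furthermore'' clause by additivity of $g$ plus the trivial-kernel criterion for linearized permutation polynomials. Your added care about ensuring $g$ is itself a permutation before applying Theorem \ref{Th:11.1.2}, and your explicit width identity $W_S=q-N+1$, are sensible refinements of details the paper leaves implicit, but they do not change the argument.
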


\begin{proof}
 By Theorem \ref{Th:11.1.1} we may assume that $\mathcal{F} = \mathcal{F}(t,g(t),0)$ is a proper star flock of a wide cone with star point
$P = (0,0,1,0)$ and $g$ a non-linear permutation polynomial
(non-linearity follows from Theorem \ref{Th:6.1}).

 Since the cone is wide, there are at least $\lfloor \frac{q+2}{2}
\rfloor$ lines in $x_3 = 0$ through $P$ which contain the points of
the critical cone of this flock. Thus, there are at least $\lfloor
\frac{q+2}{2} \rfloor$ planes determined by these lines and the
vertex of the cone. We now consider the dual setting and use the
notation introduced at the beginning of this section. These
planes correspond to points in $W=0$ which are also in $Z=0$,
i.e., points on $m$. Since these planes contain points of the
carrier, the points on $m$ that they correspond with are points
where lines of $D_G$ meet $m$. Thus, for a wide cone, we must
have $N \le q+1 - \lfloor \frac{q+2}{2} \rfloor = \lfloor
\frac{q+1}{2} \rfloor$. Since the star flock is proper, we must
also have $N> 1$. We can identify $Z=0$ with the projective
plane $\Pi$ obtained by suppressing the third coordinates.
Thus, the points of $D_F$ have coordinates $(t,g(t),1)$ in
$\Pi$. By Theorem \ref{Th:11.1.2}, if $p^e = 1$ or $2$, the
number of slopes determined by $D_F$ does not satisfy $1 < N \le
\lfloor \frac{q+1}{2} \rfloor$. Thus, $p^e > 2$, and we have that
$g$ is a $p^e$-linearized polynomial.

 Now, suppose that g is a non-linear $p^e$-linearized polynomial for some $e \mid n$ with $e < n$. Such
functions are additive (i.e., $g(t+s) = g(t) + g(s)$), so to
calculate $N$ we need only calculate the number of distinct values
of $g(t)/t$ for $t \ne 0$. Let $K$ be the proper subfield of
$GF(q)$ of order $p^e$. For each $c \in K$ we have $g(ct) =
cg(t)$. Thus, for $c \in K \setminus \{0\}, g(ct)/ct = g(t)/t$,
and the number of distinct non-zero values of $g(t)/t$ is at most
$(q-1)/(p^e-1)$. Therefore, $N \le (q-1)/(p^e-1) + 1 < \lfloor
\frac{q+1}{2} \rfloor$ if $p^e > 2$. We obtain a star flock of a wide cone
from such a $g$, and it is proper since $g$ is not linear.

Finally, For each $x
\ne 0$, $f(x)g(t) + xt$ is a $p^e$-linearized polynomial (in $t$). Such
polynomials represent permutations if and only if they vanish
only at $t=0$ (Dickson \cite{LED:01}). Thus, (\ref{Eq:12})
insures that each of the points $(x,f(x),1,0)$ is in the critical
cone of $\mathcal{F}$ by Theorem \ref{Th:herd}.
\end{proof}

%

\begin{example} \label{Ex:1}
Let $q = p^h, \quad p$ an odd prime, and $h > 1$. Let $\sigma =
p^i, 1 \le i \le h-1$. Then $t^{\sigma}$ is a
$p^{gcd(i,h)}$-linearized polynomial. In $x_3 = 0$ the points $(x,
-m/x, 1, 0), (1,0,0,0)$ and $(0,1,0,0)$ form a conic ($xy=-m$).
$\frac{-m}{x}t^{\sigma}  + xt = xt(\frac{-mt^{\sigma -1}}{x^2} +
1)$ will have no solutions with $xt\ne 0$ if and only if $m$ is a
non-square in $GF(q)$.  Thus, $\mathcal{F}(t, t^{\sigma}, 0)$
will be a (proper) star flock of the quadratic cone $xy = -m$
when $m$ is a non-square. These star flocks are known as the
\emph{Kantor-Knuth} (or K1) flocks of a quadratic cone. These examples come from
a special case ($k=0$) of the fact that the points $(x,f(x),1,0), x \ne 0$ where
$f(x) = \frac{-m}{x^{2k+1}}$, $0 \le k \le
\frac{q-1}{2}$ are in the critical cone of this star flock when $m$ is a non-square.
\end{example}

\begin{corollary} \label{Co:11.1.1}
If $q$ is a prime, then all star flocks of wide cones in
$PG(3,q)$ are linear.
\end{corollary}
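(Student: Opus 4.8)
The plan is to reduce the statement to Theorem \ref{Th:11.1.3} by showing that its hypotheses simply cannot be satisfied when $q$ is prime, and then to dispose of the remaining (non-proper) case by an elementary incidence observation. I would begin by recording the dichotomy implicit in the definitions: a star flock whose common point is unique is, by definition, proper, while a star flock whose common point is \emph{not} unique has its planes passing through two distinct points and hence through the line they determine. A flock whose planes share a common line is exactly a linear flock. Consequently every star flock is either proper or linear, and it suffices to prove that a wide cone in $PG(3,q)$ admits no proper star flock when $q$ is prime.

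Next I would argue by contradiction, invoking Theorem \ref{Th:11.1.3}. Suppose $\mathcal{F}$ were a proper star flock of a wide cone with $q = p$ prime, so that in the notation $q = p^n$ we have $n = 1$. Theorem \ref{Th:11.1.3} would then supply a coordinatization $\mathcal{F} = \mathcal{F}(t,g(t),0)$ in which $g$ is a \emph{non-linear} $p^e$-linearized permutation polynomial for some $e \mid n$ with $e < n$. But the only divisor of $n = 1$ is $1$ itself, which violates $e < n = 1$, and the sole non-negative integer below $1$ is $e = 0$. The value $e = 0$ is untenable: a $p^0$-linearized polynomial $\sum_{i} \alpha_i t^{p^{i\cdot 0}} = \bigl(\sum_i \alpha_i\bigr)t$ is a scalar multiple of $t$, hence linear, contradicting the required non-linearity of $g$ (and, moreover, the defining relation $n = ke$ cannot hold with $e = 0$ and $n = 1$). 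No admissible exponent exists, so no such $g$, and therefore no proper star flock of a wide cone, can occur when $q$ is prime.

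Combining the two steps finishes the argument: every star flock of a wide cone in $PG(3,q)$ with $q$ prime fails to be proper, so its planes share at least two points, hence a common line, and it is linear. The one place demanding care is precisely the edge case $e = 0$, since this is the single numerical possibility compatible with $e < n = 1$; I would make explicit that the genuinely linearized, many-valued behaviour captured by Theorem \ref{Th:11.1.2} requires a proper subfield $GF(p^e) \subsetneq GF(q)$ (so $1 \le e < n$), and it is exactly the absence of such an intermediate field over the prime field that forces the conclusion. In short, the corollary is a purely arithmetic consequence of $n = 1$ having no proper divisor, and no estimate beyond Theorem \ref{Th:11.1.3} is needed.
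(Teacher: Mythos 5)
Your proposal is correct and follows essentially the same route as the paper: the paper's proof likewise observes that a prime field has no proper subfields, hence no non-linear $p^e$-linearized permutation polynomials, so Theorem \ref{Th:11.1.3} rules out proper star flocks of wide cones. Your additional explicit handling of the $e=0$ edge case and the observation that a non-proper star flock is linear are worthwhile elaborations of steps the paper leaves implicit, but they do not change the argument.
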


\begin{proof}
 Since a prime field has no proper subfields, there are
no $p^e$-linearized polynomials other than the linear ones, so by
Theorem \ref{Th:11.1.3} there are no proper star flocks.
\end{proof}

\begin{corollary} \label{Co:11.1.2}
If $q = 2^p$, with $p$ a prime, then all star flocks of wide
cones in $PG(3,q)$ are linear.
\end{corollary}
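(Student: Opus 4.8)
The plan is to mirror the argument of Corollary \ref{Co:11.1.1} while accounting for the fact that, unlike a prime field, $GF(2^p)$ does possess a proper subfield, namely $GF(2)$. First I would reinterpret the hypothesis in the notation of Theorem \ref{Th:11.1.3}: here the prime is $2$ and the exponent is the prime $p$, so $q = 2^p$ plays the role of $P^{n}$ with prime $P = 2$ and $n = p$. By Theorem \ref{Th:11.1.3}, producing a proper star flock of a wide cone is equivalent to exhibiting a non-linear $2^e$-linearized permutation polynomial $g$ over $GF(q)$ with $e \mid p$ and $e < p$.

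Next I would record the extra constraint extracted inside the proof of Theorem \ref{Th:11.1.3}: for the critical cone to be wide one needs $N \le \lfloor \frac{q+1}{2}\rfloor$, and this forces $2^e > 2$. I would then use the primality of $p$: the only divisors of $p$ are $1$ and $p$, so the sole value of $e$ satisfying $e \mid p$ and $e < p$ is $e = 1$. But $e = 1$ yields $2^e = 2$, contradicting $2^e > 2$. Hence no non-linear $2^e$-linearized permutation polynomial of the required type exists, so by Theorem \ref{Th:11.1.3} there is no proper star flock of a wide cone.

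Finally I would close the gap between ``not proper'' and ``linear'': if a star flock is not proper, its planes share at least two common points and therefore share the line joining them, so it is linear. Combining this with the previous paragraph, every star flock of a wide cone in $PG(3,2^p)$ is linear.

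The main obstacle is the elimination of the case $e = 1$. For a prime field (Corollary \ref{Co:11.1.1}) there are simply no proper subfields and the argument is immediate, but here $GF(2) \subset GF(2^p)$ supplies genuine $2$-linearized permutation polynomials that must be ruled out. The real content is confirming that such a $g$ cannot give a wide cone: a non-linear $2$-linearized permutation polynomial lands in case (ii) of Theorem \ref{Th:11.1.2} with $N \ge 2^{p-1} + 1 = \frac{q}{2} + 1$, which exceeds $\lfloor \frac{q+1}{2}\rfloor = \frac{q}{2}$ (using that $q$ is even), so the cone fails to be wide. Making this quantitative comparison carefully, together with the divisor bookkeeping forced by $p$ prime, is the crux of the argument.
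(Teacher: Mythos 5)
Your proposal is correct and follows essentially the same route as the paper: the primality of the exponent forces $e=1$, hence $p^e=2$, which is excluded because the resulting $N \ge 2^{p-1}+1 = q/2+1$ exceeds the bound $\lfloor\frac{q+1}{2}\rfloor = q/2$ required for a wide cone. Your write-up merely makes explicit the quantitative comparison that the paper's one-line proof leaves implicit.
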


\begin{proof}
If a star flock were not linear in this situation, then we would
have $p^e = 2$ which is ruled out since it gives too large an $N$
for a wide cone.
\end{proof}

 We examine two classes of examples of proper star flocks of wide cones which
correspond to extreme values of $N$. Let $E = GF(q) = GF( p^n)$
with a subfield $K = GF(p^e)$, then the permutation function $g(t)
= k(t^{p^e} - ct)$, where $c \ne {\beta}^{p^e - 1}$ with $k,
\beta \in GF(q)^*$, gives $N = (q-1)/(p^e -1)$ and for $p^e
> 2$ we have $N < \lfloor \frac{q+1}{2} \rfloor $, and so, $\mathcal{F}(t,g(t),-(at + bg(t)))$ for
$a,b \in GF(q)$ is a star flock of a wide cone. These examples
include those of Example \ref{Ex:1} in odd characteristic, so we
shall call them (or any flocks equivalent to them, see
\cite{WEC2}) \emph{Kantor-Knuth star flocks}. Under the same
assumptions about $q$ and $e$, the function $g(t) =
k_1(Tr_{E/K}(\frac{t}{k_2}) - ct)$, where $c \ne
\frac{1}{\beta}Tr_{E/K}(\frac{\beta}{k_2})$,  $k_1, k_2, \beta \in
GF(q)^*$ and $Tr_{E/K}$ is the relative trace function from $E$
onto $K$, gives $N = p^{n-e} + 1$, and again, if $p^e > 2, N <
\lfloor \frac{q+1}{2} \rfloor$. We call these proper star flocks
of wide cones, \emph{Holder-Megyesi star flocks} (L. Megyesi gave
the example in the R\'edei blocking set context and L. Holder,
independently, gave the example in the conic blocking set
context (see \cite{LH:04}).

\begin{corollary} \label{Co:11.1.3}
If $q=p^2$ for any prime $p$, or $p = 4$, then all star flocks of
wide cones in $PG(3,q)$ are either linear or Kantor-Knuth star
flocks.
\end{corollary}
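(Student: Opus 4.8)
The plan is to invoke Theorem~\ref{Th:11.1.3} to convert the statement into one about $p^e$-linearized permutation polynomials, and then observe that in each listed case the admissible linearization exponent $e$ is forced to satisfy $n = 2e$, which collapses the range of possibilities and pins down the shape of $g$ completely.

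First I would record the structural facts supplied by Theorem~\ref{Th:11.1.3} (together with its proof): a \emph{proper} star flock of a wide cone corresponds to a non-linear $p^e$-linearized permutation polynomial $g$ with $e \mid n$, $e < n$, and $p^e > 2$. I would then enumerate the admissible $e$. For $q = p^2$ we have $n = 2$, so the only divisor $e < n$ is $e = 1$; in characteristic two this gives $p^e = 2$, which is excluded and leaves only linear flocks (recovering Corollary~\ref{Co:11.1.2}), while for odd $p$ it leaves $e = 1$, so $n = 2e$. For the remaining case, $q = 16 = 4^2 = 2^4$ (the case written ``$p=4$''), we have $n = 4$ and $e \in \{1,2\}$; here $e = 1$ again yields the excluded $p^e = 2$, leaving $e = 2$ and once more $n = 2e$. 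Thus in every non-linear admissible situation the linearization index satisfies $k = n/e = 2$.

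The core of the argument is therefore the single case $n = 2e$. Writing $s = p^e$, the field $GF(q) = GF(s^2)$ is a quadratic extension of $K = GF(s)$, and an $s$-linearized polynomial is exactly a $K$-linear map of $GF(s^2)$, i.e.\ a binomial $g(t) = \alpha_0 t + \alpha_1 t^{s}$ with $\alpha_0,\alpha_1 \in GF(s^2)$. Non-linearity means $\alpha_1 \neq 0$, so I can factor $g(t) = \alpha_1\bigl(t^{s} - c\,t\bigr)$ with $c = -\alpha_0/\alpha_1$, which is precisely the Kantor-Knuth form $g(t) = k(t^{s} - ct)$ with $k = \alpha_1$. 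By Dickson's criterion this binomial is a permutation if and only if $t^{s} - ct$ vanishes only at $t = 0$, that is, $t^{s-1} = c$ is unsolvable, which is exactly the defining condition $c \neq \beta^{s-1}$ for all $\beta \in GF(q)^*$. Hence every non-linear admissible $g$ is of Kantor-Knuth type, and by Theorem~\ref{Th:11.1.1} the associated flock is equivalent to a Kantor-Knuth star flock; the only other flocks are the linear ones, giving the stated dichotomy.

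I expect the only genuine obstacle to be expository rather than mathematical: one must check that the two bounds of Theorem~\ref{Th:11.1.2}(ii) actually coincide when $k = 2$, since with $n = 2e$ the lower bound $p^{n-e}+1 = s+1$ equals the upper bound $(s^2-1)/(s-1) = s+1$, so that $N = s+1$ is forced and no Holder-Megyesi value of $N$ can occur. The remaining care is bookkeeping the excluded subcases $p^e = 2$ (the degenerate $q = 4$, and the $e = 1$ branch for $q = 16$) so that they are correctly absorbed into the ``linear'' alternative and do not masquerade as proper star flocks.
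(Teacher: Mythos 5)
Your proposal is correct and follows essentially the same route as the paper: reduce via Theorem~\ref{Th:11.1.3} to $p^e$-linearized permutation polynomials, observe that the admissible $e$ forces $k=n/e=2$ so that $g$ is a binomial $\alpha_0 t+\alpha_1 t^{p^e}$, and recognize this as the Kantor-Knuth form. The only cosmetic difference is that the paper disposes of $q=4$ by citing the classification of flocks in $PG(3,4)$ from an earlier paper, whereas you derive it internally from the exclusion $p^e>2$; both are valid.
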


\begin{proof}
All flocks of wide cones in $PG(3,4)$ are linear (see
\cite{WEC2}) so we may assume that $p > 2$. Up to scalar
multiples the only $p$-linearized permutation polynomials are of
the form $t^p - ct$, where $c \ne {\beta}^{p-1}$ for any $\beta
\in GF(q)^*$ or $t$. In the case of $PG(3,16)$, the 2-linearized
polynomials do not give star flocks of wide cones, so the only
$s$-linearized polynomials giving star flocks of wide cones have
$s = 4$.
\end{proof}

\begin{example} \label{Ex:2}
In PG(2,16) there are two projectively inequivalent hyperovals, the
hyperconic (a.k.a. regular hyperoval) and the Lunelli-Sce hyperoval. The cone over a hyperconic (quadratic
cone) admits only linear star flocks (Thas \cite{JAT:87}), but a cone over the
Lunelli-Sce hyperoval admits both linear and
Kantor-Knuth star flocks.
   Let $\lambda$ be a primitive element of $GF(16)$ satisfying
$\lambda^4 = \lambda + 1$. The point sets of $x_3 = 0$ given by
$\mathcal{H}_i = \{(x,f_i(x),1,0)\colon x \in GF(16)\} \cup
\{(0,1,0,0), (1,0,0,0)\}$ with $i=1,2$, where
\begin{gather*}
f_1(x) = \lambda^{13}x^{14} + \lambda^3 x^{12} + \lambda^6 x^{10}
+ x^8 + \lambda^6 x^6 + \lambda^3 x^4 + \lambda^{13}x^2 \makebox{
and } \\
f_2(x) = \lambda^4 x^{14} + \lambda^{10} x^{12} + \lambda^{11}
x^{10} + \lambda^{11} x^8 + x^6 + \lambda^2 x^4,
\end{gather*}
represent Lunelli-Sce hyperovals which intersect in nine points,
the maximum number of points in the intersection of two
hyperovals in $PG(2,16)$. Among the points of intersection are
the points $(0,0,1,0),$ $(1,1,1,0),$ $(\lambda^{10}, \lambda^{13},
1,0),$ $(\lambda^8, \lambda^2,1,0),$ $(\lambda^{14}, \lambda^5,
1,0)$ and $(\lambda^{13}, \lambda^{10},1,0)$. The slopes of the
lines through $(0,0,0,1)$ and each of the other five points form
a set of values $\{\lambda^{3i} \colon 0 \le i \le 4 \}$, i.e.,
the non-zero cubes in $GF(16)$. The symmetric difference of these
two hyperovals, $\mathcal{H}_1 \triangledown \mathcal{H}_2$ is
another Lunelli-Sce hyperoval (see \cite{BrCh:00}) with these five
lines as exterior lines through the point $(0,0,1,0)$. The proper
Kantor-Knuth star flock, $\mathcal{F}(t,t^4,0)$, with star point
$(0,0,1,0)$ contains $\mathcal{H}_1 \triangledown \mathcal{H}_2$
in the carrier of its critical cone.
\end{example}

\begin{corollary} \label{Co:11.1.4}
If $q = p^3$ for any prime $p$ or $p = 4$, then all star flocks of
wide cones in $PG(3,q)$ are either linear, Kantor-Knuth or
Holder-Megyesi star flocks.
\end{corollary}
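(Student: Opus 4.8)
The plan is to reduce every case to the study of $s$-linearized permutation polynomials over $GF(s^{3})$ and $GF(s^{2})$, settle these by a direction-counting identity, and then match the two surviving values of $N$ to the named families. First I would invoke Theorem \ref{Th:11.1.3}. For $q=p^{3}$ the only divisor $e\mid 3$ with $e<3$ is $e=1$, so a proper star flock of a wide cone forces $s=p^{e}=p>2$ (the value $s\le 2$ being excluded as in Corollary \ref{Co:11.1.2}); hence $p$ is odd and $g$ is a non-linear $p$-linearized permutation polynomial of $E=GF(p^{3})$. For $q=4^{3}=2^{6}$ the admissible divisors are $e\in\{2,3\}$, giving $s\in\{4,8\}$; the case $s=8$ has $q=s^{2}$ (so $k=2$) while $s=4$ has $q=s^{3}$ (so $k=3$). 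Thus, writing $K=GF(s)$, it suffices to classify up to equivalence the non-linear $s$-linearized permutation polynomials $g$ of $E=GF(s^{3})$ (the case $k=3$) and of $E=GF(s^{2})$ (the case $k=2$).

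The case $k=2$ is immediate: over $GF(s^{2})$ the Frobenius $t\mapsto t^{s}$ squares to the identity, so every $s$-linearized polynomial is $g(t)=a_{0}t+a_{1}t^{s}$, and non-linearity gives $a_{1}\ne 0$. Then $g(t)=a_{1}(t^{s}-ct)$ with $c=-a_{0}/a_{1}$, which is literally a Kantor-Knuth form; since $k=2$ the Kantor-Knuth and Holder-Megyesi values of $N$ coincide, exactly as in Corollary \ref{Co:11.1.3}. (The degenerate value $q=8$ of $q=p^{3}$ has $s=2$, admits no proper star flock of a wide cone, and is covered by Corollary \ref{Co:11.1.2}.)

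For $k=3$ I would run the following count. Write $g=a_{0}t+a_{1}t^{s}+a_{2}t^{s^{2}}$ with $a_{1},a_{2}$ not both $0$, let $L_{\lambda}$ denote multiplication by $\lambda\in E$, and set $d_{\lambda}=\dim_{K}\ker(g-L_{\lambda})$; then $\lambda$ is a slope exactly when $d_{\lambda}\ge 1$, and since $g$ is non-linear $d_{\lambda}\le 2$. Counting the pairs $(\lambda,u)$ with $u\ne 0$ and $g(u)=\lambda u$ in two ways — one $\lambda$ for each $u$, against $\sum_{\lambda}(s^{d_{\lambda}}-1)$ — gives
\begin{equation*}
N_{1}(s-1)+N_{2}(s^{2}-1)=s^{3}-1,
\end{equation*}
where $N_{i}$ is the number of slopes with $d_{\lambda}=i$. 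Dividing by $s-1$ yields $N=N_{1}+N_{2}=(s^{2}+s+1)-sN_{2}$. The lower bound $N\ge s^{2}+1$ from Theorem \ref{Th:11.1.2} forces $N_{2}\le 1$, so the only possibilities are $N=s^{2}+s+1$ (when $N_{2}=0$) and $N=s^{2}+1$ (when $N_{2}=1$); every intermediate value is excluded. This identity is the clean core of the argument.

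It then remains to identify the two extremes with the named families. When $N=s^{2}+1$ there is a unique $\lambda_{0}$ with $d_{\lambda_{0}}=2$, so $g-L_{\lambda_{0}}$ is a $K$-linear map of rank one; its image is a line $K\omega$, and since every $K$-linear functional on $E$ has the form $x\mapsto Tr_{E/K}(\delta x)$ for a unique $\delta$, we obtain $g(x)=\lambda_{0}x+\omega\,Tr_{E/K}(\delta x)$, a Holder-Megyesi form (the permutation condition being exactly the one recorded before Corollary \ref{Co:11.1.3}); this subcase is self-contained. When $N=s^{2}+s+1$ the slope map $u\mapsto g(u)/u$ is injective on the $s^{2}+s+1$ one-dimensional $K$-subspaces of $E$, i.e.\ the associated $K$-linear set is scattered of maximal rank $3$. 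The main obstacle is to conclude from this that $g$ is equivalent to $t\mapsto t^{s}$, hence to a Kantor-Knuth form $k(t^{s}-ct)$: this is where one must invoke the extremal part of the direction-problem classification of \cite{BlBaBrStSz:99} and \cite{SB:02}, which for the maximal direction number pins down $g$ up to the equivalence of Theorem \ref{Th:11.1.1}. Granting this, every non-linear $s$-linearized permutation polynomial in the cases $k=2,3$ gives a Kantor-Knuth or Holder-Megyesi star flock, and together with the linear flocks of Theorem \ref{Th:6.1} this exhausts the star flocks of wide cones, proving the corollary.
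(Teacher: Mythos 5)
Your reduction to classifying non-linear $s$-linearized permutation polynomials for $k=2,3$, your double count giving $N_1(s-1)+N_2(s^2-1)=s^3-1$, and your rank-one identification of the $N=s^2+1$ case with the Holder--Megyesi form are all correct, and they are a genuinely different (and more structural) route than the paper takes: the paper instead cites Sherman \cite{BFS:02} for the fact that only $N=p^2+1$ and $N=p^2+p+1$ occur, which your identity re-derives. (Two small remarks: $N_2\le 1$ follows immediately from the fact that two distinct $2$-dimensional kernels in a $3$-dimensional $K$-space would meet nontrivially, forcing $\lambda_1=\lambda_2$, so you need not lean on the lower bound of Theorem \ref{Th:11.1.2} at all --- which is just as well, since the $e$ in that theorem is defined intrinsically by the intersection numbers and need not equal your $e_0$ when $q=64$.)

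The genuine gap is the case $N=s^2+s+1$. You explicitly write ``Granting this'' and attribute the needed statement --- that a non-linear $s$-linearized permutation polynomial of $GF(s^3)$ attaining the maximal slope number is equivalent to $t\mapsto t^s$ --- to the extremal part of \cite{BlBaBrStSz:99} and \cite{SB:02}. Those results are precisely Theorem \ref{Th:11.1.2}, which you have already used: they give the bounds on $N$ and the conclusion that $g$ is $p^e$-linearized, but they do \emph{not} classify the polynomials attaining $N=(q-1)/(p^e-1)$. That classification (of maximum scattered linear sets, in modern language) is a separate and nontrivial matter, so as written this step is unsupported and the corollary is not proved. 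The paper closes exactly this case by a counting argument you have not replaced: it counts the monic Holder--Megyesi polynomials, $(p^2+p+1)(p^3-p^2-1)$ of them, the monic polynomials equivalent to Kantor--Knuth ones, and the single monic linear one, and checks that the total equals the number $s^{k(k-1)/2}\prod_{i=1}^{k-1}(s^i-1)$ of all monic $s$-linearized permutation polynomials from Proposition \ref{Pr:13}, leaving no room for anything else. Either import that exhaustive count or supply an actual classification of the $N=s^2+s+1$ case; citing the direction theorems again will not do it.
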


\begin{proof}
If $p = 2$, then a star flock of a wide cone must be linear by
Corollary \ref{Co:11.1.2}, so we may assume that $p > 2$. By
Sherman \cite{BFS:02}, only two values of $N$ occur when $q =
p^3$, namely $p^2 + 1$ and $p^2 + p + 1$. Furthermore, when $N =
p^2 + 1$ then $g(t) = k_1(Tr_{E/K}(\frac{t}{k_2}) - ct)$, where
$c \ne \frac{1}{\beta}Tr_{E/K}(\frac{\beta}{k_2})$,  $k_1, k_2,
\beta \in GF(q)^*$. There are $(p^2 + p + 1)(p^3 - p^2 - 1)$
projectively equivalent monic polynomials of this type
(Holder-Megyesi). There are $(p^3 - p^2 - p -1)^2$ projectively
equivalent monic polynomials of the form $t^{p^2} - at^p - ct$
with $a \ne {\beta}^{p^2-p}$ for any $\beta \in GF(q)^*$ which
are permutation polynomials. These are projectively equivalent to
the $p^3 - p^2 - p - 1$ monic permutations of the form $t^p - at$
with $a \ne {\beta}^{p-1}$ for any $\beta \in GF(q)^*$
(Kantor-Knuth). Together with the single monic linear function,
we have accounted for all the monic $p$-linearized permutation
polynomials (Proposition \ref{Pr:13}).
\end{proof}

\begin{remark}
In principle we could continue in this vein and classify all star
flocks of wide cones. Indeed, it would be easy enough to do the
next case of $q = p^4$, since Sherman \cite{BFS:02} has already
determined the spectrum of values of $N$ in this case.
\end{remark}

\section{Bilinear Star Flocks}

A \emph{bilinear flock} is a flock in which each plane passes through at least one of two distinct lines of $PG(3,q)$ (carrier lines). If the carrier lines of a bilinear flock meet, the flock is a proper star flock. Bilinear flocks in $PG(3,K)$ with $K$ infinite have been studied (Biliotti and Johnson \cite{BiJo:99}), but no bilinear flocks of quadratic cones are known in the finite case.

In the dual setting for $PG(3,q)$, a proper bilinear star flock corresponds to a R\'edei blocking set whose $q$ affine points lie on two distinct lines. In this situation, using the notation of the previous section, there will exist lines containing exactly two points of $D_F$ and so, $p^e \le 2$. It follows from the proof of Theorem \ref{Th:11.1.3} that any cone of such a flock must be thin.

\begin{theorem} \label{Th:nobi}
In $PG(3,q)$ there are no proper bilinear star flocks of wide cones.
\qed
\end{theorem}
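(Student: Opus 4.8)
The plan is to argue by contradiction, translating the hypothesis into the dual language of Section~4 and then colliding it with the spectrum of Theorem~\ref{Th:11.1.2}. Suppose, for contradiction, that $\mathcal{F}$ is a proper bilinear star flock of a wide cone. Being a proper star flock, in the dual its $q$ points $D_F$ lie in the plane $Z=0$ and are not collinear; being bilinear, these $q$ points lie on two \emph{distinct} lines $\ell_1$ and $\ell_2$ of $Z=0$. Since any two distinct lines of a projective plane meet in a unique point, $\ell_1 \cap \ell_2 = \{O\}$ for some point $O$.

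First I would exhibit a line meeting $D_F$ in exactly two points. Because $D_F$ is not collinear, neither $\ell_1$ nor $\ell_2$ alone can contain all of $D_F$; hence there is a point $A \in D_F$ on $\ell_1 \setminus \{O\}$ and a point $B \in D_F$ on $\ell_2 \setminus \{O\}$. The line $AB$ is different from both $\ell_1$ and $\ell_2$, so it meets $\ell_1$ only in $A$ and $\ell_2$ only in $B$. As every point of $D_F$ lies on $\ell_1 \cup \ell_2$, the line $AB$ contains no third point of $D_F$, i.e. it is a $2$-secant of $D_F$. By the definition of the integer $e$ in Theorem~\ref{Th:11.1.2}, every line meeting $D_F$ in at least two points carries a multiple of $p^e$ points; applied to $AB$ this forces $p^e \mid 2$, whence $p^e \le 2$.

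Finally I would invoke the proof of Theorem~\ref{Th:11.1.3}. There it is shown that a proper star flock of a \emph{wide} cone has slope number $N$ satisfying $1 < N \le \lfloor \frac{q+1}{2} \rfloor$, and that the cases $p^e = 1$ and $p^e = 2$ of Theorem~\ref{Th:11.1.2} both produce $N > \lfloor \frac{q+1}{2} \rfloor$; consequently a wide cone forces $p^e > 2$. This contradicts $p^e \le 2$, so no proper bilinear star flock of a wide cone can exist. The only real point requiring care is the first step: one must check that the two carrier lines are genuinely non-degenerate---both carrying points of $D_F$ off their common point $O$---so that the $2$-secant $AB$ is guaranteed; once that secant is produced the bound $p^e \le 2$ is immediate and the collision with the wide-cone requirement $p^e > 2$ closes the argument at once.
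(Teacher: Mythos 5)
Your proof is correct and follows essentially the same route as the paper: pass to the dual, observe that the two carrier lines force a $2$-secant of $D_F$ so that $p^e \le 2$, and then collide this with the bound $p^e > 2$ extracted from the proof of Theorem~\ref{Th:11.1.3} for wide cones. Your explicit verification that a genuine $2$-secant $AB$ exists (using that $D_F$ is not collinear) is a detail the paper leaves implicit, but the argument is the same.
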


This result is sharp as the following examples show.

\begin{example}
Let $q$ be odd. A well known R\'edei blocking set in $PG(2,q)$ is the \emph{projective triangle} of side $(q+3)/2$ (see \cite{JWPH:79}). This consists of $3(q+1)/2$ points with $(q+3)/2$ points on each side of a triangle (including the vertices) such that the line joining any two points of the set on different sides of the triangle meets the third side at a point of the set. A representation of a projective triangle which includes the point $(0,0,1)$ has affine points of the form $(x,x^{\frac{q+1}{2}},1)$ and infinite points $(1,\frac{1+z}{1-z},0) \cup (1,-1,0) \cup (1,1,0)$ for each non-square $z \in GF(q)$. Note that the affine points of the set lie on the two lines $y = x$ (for square $x$) and $y = -x$ (for non-square $x$). This representation gives rise to the proper star flock $\mathcal{F}(t,t^{\frac{q+1}{2}},0)$ which can only be a flock of a thin cone. In particular, since the star point is $(0,0,1,0)$ the critical cone consists only of points other than $(0,0,1,0)$ on the lines $y = cx$ where $\frac{1-c}{1+c}$ is a non-zero square. There are $\frac{q-1}{2}$ such values of $c$ if $-1$ is not a square in $GF(q)$ and $\frac{q-3}{2}$ otherwise. The point $(0,1,0,0)$ is in the critical cone, and hence the points of the line $x = 0$ other than $(0,0,1,0)$ if and only if $-1$ is a square in $GF(q)$, that is, $q \equiv 1 \pmod{4}$. Therefore, for any odd $q$, the width of the base of this cone is $\frac{q-1}{2} < \lfloor \frac{q+2}{2} \rfloor$ and so the cone is thin, but any larger cone would be wide.

A special case of this example occurs in $PG(2,q^2)$ for $q$ an odd prime power. The map $x \mapsto x^q$ is an involutory automorphism of $GF(q^2)$. The flock $\mathcal{F}(t,At^{\frac{q^2 + 1}{2}},0)$, where $A^q = -A$ has a critical cone which contains the points of $x^qy = z^{q+1}$ in the carrier plane $w = 0$. In the dual setting, the points of $Z = 0$ on the lines of $D_G$ are (with Z coordinate suppressed) $(0,1,0)$ and $\{(1,\alpha,0) \mid \alpha \in GF(q)\}$. The points of $W=0 $ on the lines determined by pairs of points of $D_F$ are (again with Z coordinate suppressed): $(1,A,0)$,$(1,-A,0)$ and $\{(1,A(\frac{1+z}{1-z}),0) \mid z \text{ is a nonsquare in }GF(q^2) \}$. All of these points lie on the line $m = \{W=0\} \cap \{Z=0\}$ and we wish to determine  $A$ so that the sets are disjoint. Clearly, $A$ can not lie in $GF(q)$. The condition that $A(\frac{1+z}{1-z}) \in GF(q)$ is (with $\lambda$ a primitive element of $GF(q^2)$), $(A^{q-1}-1)(\lambda^{(q+1)(2k+1)} - 1) = (A^{q-1}+1)(\lambda^{(q-1)(2k+1)}-1)$ for all integer $k$. Assuming $A^q = -A$, this reduces to $\lambda^{(q+1)(2k+1)} = 1$, which implies that $q-1 \mid 2k + 1$, a contradiction since $q$ is odd.
\end{example}

\begin{example}
The analogous example for even $q$ is the \emph{projective triad} of side $(q+2)/2$ (see \cite{JWPH:79}). This set consists of $(3q+2)/2$ points with $(q+2)/2$ points on each of three concurrent lines (point of intersection included) such that the line joining any two points of the set on different lines of the triad meets the third line at a point of the set. A representation of a projective triad which includes the point $(0,0,1)$ has affine points $(x,\mathrm{tr}(x),1)$, where ``$\mathrm{tr}$'' is the absolute trace function from $GF(q^2)$ to $GF(2)$, and infinite points $(1,\frac{1}{a},0) \cup (1,0,0)$ where $\mathrm{tr}(a) = 1$. The critical cone of the proper start flock which arises from this example consists only of points other than $(0,0,1,0)$ on the lines $y = cx$ where $\mathrm{tr}(c) = 0$. Note that $(0,1,0,0)$ is never in the critical cone since the absolute trace function is not a permutation. The width of the base of the critical cone is therefore $\frac{q}{2}$ and the critical cone is thin, but again any larger cone would be wide.

In the special case of $PG(2,2^{2e})$, the flock $\mathcal{F}(t, \mathrm{tr}(t), 0)$ has a critical cone which contains the points of $x^qy = z^{q+1}$ for $q = 2^e$ except for $(0,1,0,0)$ in the carrier plane $w = 0$. This follows easily since an affine point of the line $y = cx$ other than $(0,0,1,0)$ which lies on this curve must satisfy $c = (\frac{z}{x})^{q+1} \in GF(q)$. This implies that in $GF(q^2)$ we have $\mathrm{tr}(c) = 0$. Note that if the relative trace function (from $GF(q^2)$ to $GF(q)$) instead of the absolute trace function is used in the definition of the flock, we would have obtained a $q$-linear (instead of bilinear) flock which is of the Megyesi-Holder type.
\end{example}




\begin{thebibliography}{99}

\bibitem{SB:02} S. Ball, The number of directions determined by a
function over a finite field, {\em J. Combin. Theory Ser. A}, {\bf 104}(2003), 341--350.

\bibitem{BiJo:99} M. Biliotti and N. L. Johnson, Bilinear flocks of quadratic cones, {\em J. Geom.} {\bf 64}(1999), 16--50.

\bibitem{BlBaBrStSz:99} A. Blokhuis, S. Ball, A. E. Brouwer, L. Storme and T.
Sz\"{o}nyi, On the number of slopes of the graph of a function
defined over a finite field,{\em J. Combin. Theory Ser. A}, {\bf
86}(1999), 187--196.

\bibitem{BlBrSz:95} A. Blokhuis, A. E. Brouwer and T. Sz\"{o}nyi, The number of
directions determined by a function f on a finite field, {\em J.
Combin. Theory Ser. A}, {\bf 70}(1995), 349--353.

\bibitem{BrCh:00} J.M.N. Brown and W.E. Cherowitzo, The
Lunelli-Sce hyperoval in $PG(2,16)$. {\em J. Geom.} {\bf
69}(2000), 15--36.

\bibitem{WEC:98} W.E. Cherowitzo, $\alpha$-Flocks and
hyperovals. {\em Geom. Dedicata} {\bf 72}(1998), 221-246.

\bibitem{WEC:98b} W.E. Cherowitzo, Monomial Flocks of Monomial
Cones in Even Characteristic. {\em J. Belg. Math. Soc. - Simon
Stevin}, {\bf 5}(1998), 241-253.

\bibitem{WEC2} W.E. Cherowitzo, Flocks of Cones: Herds and Herd Spaces.
\emph{preprint arXiv: 0911.0104}.
\bibitem{CPPR} W.E. Cherowitzo,   T. Penttila, I. Pinneri,
and G.F. Royle, Flocks and ovals. {\em Geom. Dedicata} {\bf
60}(1996),17-37.

\bibitem{ChSt:98} W.E. Cherowitzo and L. Storme, $\alpha$-Flocks with oval
herds and monomial hyperovals, {\em Journal of Finite Fields and
their Applications} {\bf 4}(1998), 185-199.
\bibitem{DeHe:92} F. De Clerck and C. Herssens, Flocks of the
quadratic cone in $PG(3,q)$, for $q$ small, {\em University of
Ghent, The CAGe Reports} {\bf 8}(1992), 1-75.
\bibitem{DeGeTh:88} F. De Clerck, H. Gevaert and J.A. Thas, Flocks
of a quadratic cone in $PG(3,q), q \le 8$, {\em Geom. Ded.} {\bf
26}(1988), 215-230.
\bibitem{LED:01} L.E. Dickson, {\em Linear Groups with an
exposition of the Galois field theory}, Dover 1958.
\bibitem{EvGrNi:92} R. J. Evans, J. Greene and H. Niederreiter, Linearized
polynomials and permutation polynomials of finite fields, {\em
Michigan Math. J.}, {\bf 39}(1992), 405--413.

\bibitem{FiTh:79} J.C. Fisher and J.A. Thas, Flocks in $PG(3,q)$,
{\em Math. Z.} {\bf 169}(1979), 1--11.

\bibitem{GeJo:88} H. Gevaert and N.L. Johnson, Flocks of quadratic
cones, generalized quadrangles and translation planes, {\em Geom.
Ded.} {\bf 27}(1988), 301--317.

\bibitem{LH:04} L. Holder, Conic Blocking Sets in Desarguesian Projective Planes, {\em J. Geom.} {\bf 80}(2004), 95--105.


\bibitem{JoPa:97} N.L. Johnson and S.E. Payne, Flocks of Laguerre
Planes and Associated Geometries, in {\em Mostly Finite
Geometries} ed. N.L. Johnson, Marcel Dekker (1997), 51--122.

\bibitem{JWPH:79} J.W.P. Hirschfeld, {\em Projective geometries over finite fields.} Clarendon Press, Oxford 1979.

\bibitem{LiNi:97} R. Lidl and H. Niederreiter, {\em Finite Fields}, Second Edition, Cambridge University Press, 1997.

\bibitem{LoSc:81} L. Lov\'{a}sz and A. Schrijver, Remarks on a theorem of
R\'edei, {\em Studia Scient. Math. Hungar.}, {\bf 16}(1981),
449--454.

\bibitem{PoSzWe:99} O. Polverino, T. Sz\"{o}nyi and Z. Weiner, Blocking sets in
Galois planes of square order, {\em Acta Sci. Math. (Szeged)},
{\bf 65}(1999), 773--784.

\bibitem{SEP:85} S.E. Payne, A new infinite family of generalized
quadrangles. {\em Congr. Numer.} {\bf 49}(1985), 115--128.

\bibitem{LR:70} L. R\'edei, {\em L\"{u}ckenhafte Polynome \"{u}ber endlichen
K\"{o}rpen}, Birkh\"{a}user-Verlag, Basel, 1970. (English
translation: {\em Lacunary Polynomials over finite fields},
North-Holland, Amsterdam, 1973.)

\bibitem{BFS:02} B. F. Sherman, R\'edei blocking sets in finite
Desarguesian planes, {\em J. Combin. Theory Ser. A}, {\bf
98}(2002), 343--356.
\bibitem{JAT:87} J.A. Thas, Generalized quadrangles and flocks of
cones. {\em European J. Comb.} {\bf 8}(1987), 441--452.

\end{thebibliography}
\end{document}